\theoremstyle{plain}
\newtheorem{theorem}{Theorem}
\newtheorem{lemma}[theorem]{Lemma}
\newtheorem{corollary}[theorem]{Corollary}
\theoremstyle{remark}
\newtheorem{example}[theorem]{Example}
\newtheorem{remark}[theorem]{Remark}
\theoremstyle{definition}
\newtheorem{definition}[theorem]{Definition}
\newcommand{\C}{\mathbb{C}}
\newcommand{\E}{\mathbb{E}}
\newcommand{\N}{\mathbb{N}}
\newcommand{\bbp}{\mathbb{P}}
\newcommand{\R}{\mathbb{R}}
\newcommand{\T}{\mathbb{T}}
\newcommand{\Z}{\mathbb{Z}}
\newcommand{\cB}{\mathcal{B}}
\newcommand{\cF}{\mathcal{F}}
\newcommand{\cG}{\mathcal{G}}
\newcommand{\cH}{\mathcal{H}}
\newcommand{\cO}{\mathcal{O}}
\begin{document}
\title[L\'evy processes in compact groups]{\textbf{A limit theorem for occupation measures\\[1mm]
of L\'evy processes in compact groups}\\[6mm]}
\author{Arno Berger}
\address{Mathematical \& Statistical Sciences, University of Alberta, Edmonton, Alberta, Canada T6G 2G1 }
\email{aberger@math.ualberta.ca}
\urladdr{http://www.math.ualberta.ca/\symbol{126}aberger/}
\author{Steven N.\ Evans}
\address{Department of Statistics \#3860, 
367 Evans Hall, University of California, Berkeley, CA 94720-3860, USA}
\email{evans@stat.berkeley.edu}
\thanks{AB was supported by an NSERC Discovery Grant;
SNE was supported in part by NSF grant DMS-0907630.}
\urladdr{\texttt{http://www.stat.berkeley.edu/~evans/}}
\subjclass[2000]{Primary 60F15, 60G51, 37A50; Secondary 60B15, 60G50.}
\keywords{Uniform distribution, continuous uniform distribution, ergodic theorem, 
recurrence, Haar measure, Benford function.}

\begin{abstract}
A short proof is given of a necessary and sufficient condition for the normalized occupation measure of
a L\'evy process in a metrizable compact group to be asymptotically uniform with probability one.
\end{abstract}

\maketitle

\section{Introduction}

Processes with stationary and independent increments are the continuous-time analogues of sums of independent, 
identically distributed random variables. They constitute one of the simplest yet also most fundamental classes 
of stochastic processes. With an additional assumption on the regularity of sample paths, such processes are 
referred to as {\em L\'evy processes}, see Definition~\ref{def:Levy} below. The class of L\'evy processes has 
been studied extensively and in many different state spaces, ranging from the classical case of $\R^d$ (where 
it contains both the Wiener and the Poisson process as extremely important examples) to more general topological 
groups, including non-Abelian ones. Due to their capability of incorporating both diffusion-type continuous 
evolution and jumps, L\'evy processes are now widely used as basic stochastic models in applied mathematics, 
notably in mathematical finance and quantum physics, see, for example, \cite{applebaum_04} which also contains 
ample references to the vast literature on the subject.

The aim of this note is to provide an easily accessible proof of a fundamental fact concerning the convergence 
to Haar measure of the normalized occupation measures of any L\'evy process taking values in a compact group. 
Various special cases of the main results, Theorem \ref{thm1} and Corollary \ref{cor4}, have been (re-)discovered 
repeatedly over the years, as have some related facts, see Remark \ref{rem8}. However, arguments geared towards 
these special cases tend to obscure rather than elucidate the underlying general principle. As this note ventures 
to demonstrate, the latter is most easily understood when stripped of all superfluous particulars.

\section{Basic definitions and notations}

Throughout, $G$ denotes a metrizable compact group, with the group operation written multiplicatively, with 
neutral element $e_G$ and with Borel $\sigma$-algebra $\cB_G$. When written without a subscript, the symbol 
$\cB$ stands for the Borel $\sigma$-algebra on $\R$, or on some (Borel) subset thereof. The sets $gB$ and $Bg$ 
are the images of $B\in \cB_G$ under, respectively, the left- and right-translation by $g$; that is, 
$gB = \{gb: b \in B\}$ and $Bg = \{ bg: b\in B\}$. Write $\lambda_G$ for the (normalized) Haar measure on 
$G$; that is, $\lambda_G$ is the unique probability measure on $(G, \cB_G)$ that is invariant under all 
left-translations. (Equivalently, $\lambda_G$ is the unique probability measure on $(G, \cB_G)$ invariant 
under all right-translations.) For any closed (and hence compact) subgroup $H$ of $G$ it will be understood 
that the corresponding Haar measure $\lambda_H$ is defined on all of $(G, \cB_G)$, rather than merely on 
$(H, \cB_H)$, and $\lambda_H(G\backslash H)=0$. For any $g \in G$, denote by $\epsilon_g$ the Dirac 
probability measure concentrated at $g$.

Denote by $C(G)$ the separable Banach space of continuous, complex-valued functions on $G$, equipped with the 
supremum norm. The dual of $C(G)$ is the space of finite, complex-valued measures on $(G, \cB_G)$; from now 
on this space will always be equipped with the corresponding weak$^*$ topology.

\begin{definition}
A measurable function $\gamma:[0,+\infty) \to G$ is {\em continuously uniformly distributed\/} in $G$, 
abbreviated henceforth as {\em c.u.d.}, if
\begin{equation}\label{eq0}
\lim\nolimits_{T\to +\infty} \frac1{T} \int_0^T \varphi \bigl(  \gamma(t) \bigr)\, 
{\mathrm d}t = \int_G \varphi \, {\mathrm d}\lambda_G 
\quad
\forall \varphi \in C(G) \, .
\end{equation}
Similarly, with $\lfloor y \rfloor$ denoting, as usual, the largest integer not larger
than $y\in \R$, a sequence $(g_n)_{n\in \N}$ is {\em uniformly distributed\/}
({\em u.d.}) in $G$ whenever the function $\gamma: t\mapsto g_{\lfloor t \rfloor + 1}$ is c.u.d., see \cite{KN}.
\end{definition}

Note that $\gamma$ is c.u.d.\ if and only if the normalized occupation measures $\Lambda_T$ converge to 
$\lambda_G$ as $T\to +\infty$; here the probability measure $\Lambda_T$ is, for every $T>0$, defined by  
$$
\Lambda_T(B) = \frac{1}{T} \int_0^T {\bf 1}_B \bigl(\gamma(t)\bigr) 
\, {\rm d}t \quad \forall B \in \cB_G \, ,
$$
with ${\bf 1}_B$ denoting the indicator function of any set $B \in \cB_G$.

\begin{definition}
\label{def:Levy}
A {\em L\'evy process} in $G$ is a family $X=(X_t)_{t\ge 0}$ of $G$-valued random variables, defined on some 
underlying probability space $(\Omega, \cF, \bbp)$, with the following properties:

\smallskip

\begin{enumerate}
\item For any $0\le t_1 < t_2$ the distribution of the increment $X_{t_1}^{-1}X_{t_2}$ is
the same as the distribution of $X_0^{-1}X_{t_2 - t_1}$.

\smallskip

\item The random variables $X_{t_1}, X_{t_1}^{-1}X_{t_2}$, $X_{t_2}^{-1}X_{t_3}$, \dots,
$X_{t_{n-1}}^{-1}X_{t_n}$ are independent whenever $n\ge 2$ and $0 \le t_1 < t_2 < \ldots < t_n$.

\smallskip

\item For $\bbp$-almost all $\omega \in \Omega$, the $G$-valued function $t \mapsto X_t(\omega)$ is right-continuous 
with left-limits (or {\em rcll\/} for short); that is, $\lim_{\varepsilon \downarrow 0} X_{t+\varepsilon}(\omega) = X_t(\omega)$ for all 
$t \ge 0$, and $\lim_{\varepsilon \downarrow 0} X_{t - \varepsilon}(\omega) =: X_{t-}(\omega)$ exists for all $t>0$.
\end{enumerate}
\end{definition}

\noindent
For fixed $\omega \in \Omega$, the rcll function $t\mapsto X_t(\omega)$ is referred to as a {\em path\/} of $X$.  

Write $D$ for the set of all rcll functions from $[0,\infty)$ to $G$. Note that the equivalent French acronym 
{\em c\`adl\`ag} is often used instead of rcll, and $D$ is called the {\em Skorohod space\/} associated with $G$. 
It is possible to equip $D$ with a complete, separable metric such that the corresponding Borel $\sigma$-algebra 
$\cB_D$ coincides with the $\sigma$-algebra generated by the sets of the form 
\[
 \{ \gamma \! \in \!  D : \gamma (t_j) \! \in \! B_j \:\: \mbox{\rm for }
n \! \in \! \N ; \; j=1,\ldots , n ; \; 0 \le t_1 <  \ldots < t_n ; \;
B_1,  \ldots , B_n \! \in \! \cB_G
 \}  \,  ,
\]
see, for example, \cite[Sections 3.5 and 3.7]{Ethier_Kurtz_86}.

\section{Main result and applications}

Let $X=(X_t)_{t\ge 0}$ be a L\'evy process in $G$. For every $t\ge 0$, write $\mu_t$ for the distribution 
of the increment $X_0^{-1} X_t$. Note that the family of probability measures $(\mu_t)_{t \ge 0}$ is a 
{\em convolution semigroup}; that is, $\mu_{t_1} \ast \mu_{t_2} = \mu_{t_1 + t_2}$ for all $t_1,t_2 \ge 0$, 
where $\ast$ denotes convolution of probability measures on $(G, \cB_G)$. It follows that each 
probability measure $\mu_t$  is {\em infinitely divisible}, though no explicit use of this property 
will be made here. For any probability measure $\nu$ on $(G, \cB_G)$, recall that the {\em support\/} 
of $\nu$ is the smallest closed set $F\subseteq G$ with  $\nu(F)=1$. For every $t\ge 0$ write $S_t$ 
for the support of $\mu_t$. The following characterization of the almost sure continuous uniform 
distribution for the paths of $X$ is the main content of this note.

\begin{theorem}\label{thm1}
Let $X=(X_t)_{t\ge 0}$ be a L\'evy process in the metrizable compact group $G$. Then the following
statements are equivalent:

\begin{enumerate}

\item 
The set $\bigcup_{t\ge 0} S_t$ is dense in $G$.

\smallskip

\item The paths of $X$ are, with probability one, c.u.d.\ in $G$.

\end{enumerate}
\end{theorem}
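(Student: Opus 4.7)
The implication (ii) $\Rightarrow$ (i) should be straightforward. I first plan to reduce to the case $X_0 = e_G$ by replacing $X$ with $Y_t := X_0^{-1} X_t$, another L\'evy process with the same convolution semigroup $(\mu_t)$ and supports $(S_t)$; left-invariance of $\lambda_G$ makes the c.u.d.\ property of $Y$ equivalent to that of $X$. For any nonempty open set $V \subseteq G$, pick $\varphi \in C(G)$ with $0 \le \varphi \le {\bf 1}_V$ and $\int \varphi \, d\lambda_G > 0$. C.u.d.\ forces $\int_0^T \varphi(X_t) \, dt > 0$ for large $T$ almost surely, hence $\int_0^\infty {\bf 1}_V(X_t) \, dt > 0$ almost surely; Fubini then gives $\bbp(X_t \in V) > 0$, i.e.\ $\mu_t(V) > 0$, for some $t \ge 0$, so $V \cap S_t \neq \emptyset$.

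For (i) $\Rightarrow$ (ii), I again reduce to $X_0 = e_G$. The Peter--Weyl theorem identifies $C(G)$ as the closed span of matrix coefficients of irreducible unitary representations, so c.u.d.\ of $X$ will follow once I show that for every nontrivial irreducible unitary representation $\rho \colon G \to U(d_\rho)$,
\[
M_T(\rho) \ := \ \frac{1}{T} \int_0^T \rho \bigl( X_t \bigr) \, dt \ \longrightarrow\ 0 \quad \mbox{$\bbp$-almost surely.}
\]
The convolution semigroup property makes $\hat\mu_t(\rho) := \int_G \rho \, d\mu_t = e^{t B_\rho}$ for some $B_\rho \in M_{d_\rho}(\C)$ whose spectrum lies in $\{z \in \C : \mathrm{Re}\, z \le 0\}$. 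The decisive algebraic step is to show that $B_\rho$ is invertible: any left null vector $w$ would satisfy $w = w\, \hat\mu_t(\rho) = \int w\rho(g) \, \mu_t(dg)$ for every $t \ge 0$, and since $\rho$ is unitary, strict convexity of $\|\cdot\|^2$ forces $w\rho(g) = w$ for $\mu_t$-a.e.\ $g$, hence for every $g \in S_t$ and every $t \ge 0$; density of $\bigcup_t S_t$ together with continuity of $\rho$ extends this to all of $G$, and irreducibility of $\rho$ then forces $w = 0$. Invertibility yields $\E [ M_T(\rho) ] = T^{-1} B_\rho^{-1} ( e^{T B_\rho} - I ) = O(1/T)$.

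For the almost-sure statement, a second-moment computation using the stationary independent increments of $X$ and the class-function identity $\mathrm{tr}\, \rho(h g h^{-1}) = \mathrm{tr}\, \rho(g)$ reduces $\E \, \mathrm{tr}\, \rho(X_s^{-1} X_t)$ to $\mathrm{tr}\, e^{|t - s| B_\rho}$ (or its conjugate), leading to
\[
\E \, \mathrm{tr} \bigl( M_T(\rho) M_T(\rho)^* \bigr) \ = \ \frac{2}{T^2} \, \mathrm{Re} \int_0^T (T - u) \, \mathrm{tr} \bigl( e^{u B_\rho} \bigr) \, du \ = \ O(1/T) .
\]
Borel--Cantelli along $T_n = n^2$ then gives $M_{n^2}(\rho) \to 0$ almost surely, and the unitarity bound $\| \rho(X_t) \|_{\mathrm{op}} = 1$ interpolates this to all real $T \to \infty$. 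Metrizability of $G$ makes $L^2(G, \lambda_G)$ separable and hence the set of equivalence classes of irreducible unitary representations countable, so a single $\bbp$-full event handles all $\rho$ simultaneously.

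The main obstacle, as I see it, is this invertibility of $B_\rho$: it is the point where the support hypothesis (i) must be converted, through the integrated eigenvector relation $w = \int w\rho \, d\mu_t$ and strict convexity on the Euclidean sphere, into the purely representation-theoretic statement that a nontrivial irreducible $\rho$ admits no nonzero vector fixed by the full right action of $\rho(G)$. Everything else---the $O(1/T)$ variance bound, Borel--Cantelli on a suitable subsequence, the unitarity interpolation, and the countability of the dual---is routine once this algebraic input is in place.
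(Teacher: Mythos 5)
Your proposal is correct, but it follows a genuinely different route from the paper. The paper's proof is ergodic-theoretic: it introduces an auxiliary Haar-distributed variable $\xi$, builds the skew-product semi-flow $R_t(g,\gamma)=\bigl(g\gamma(0)^{-1}\gamma(t),\gamma(t)^{-1}\gamma(t+\bullet)\bigr)$ preserving a measure $\rho$ on $G\times D$, proves ergodicity by reducing invariant sets to right-translation-invariance of a measure $\nu\ll\lambda_G$ and invoking uniqueness of Haar measure (the reduction itself rests on Lemma \ref{lemmadd1}, a continuous-time version of the Ryll-Nardzewski random ergodic theorem argument), applies Birkhoff, and then removes the randomization by $\xi$; the converse goes through Lemma \ref{lemmadd2} ($\overline{\bigcup_t S_t}$ is a closed subgroup $H_X$), giving c.u.d.\ in $H_X$ and hence a contradiction. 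You instead run a Weyl-criterion argument: Peter--Weyl reduces (ii) to $M_T(\rho)\to 0$ a.s.\ for each nontrivial irreducible $\rho$, the Fourier transform of the convolution semigroup is $e^{tB_\rho}$, density of $\bigcup_t S_t$ forces $B_\rho$ to be invertible (your strict-convexity/fixed-vector argument is sound, and irreducibility of the contragredient kills $w$), and the exact covariance identity $\E\,\mathrm{tr}\,\rho(X_s^{-1}X_t)=\mathrm{tr}\,e^{(t-s)B_\rho}$ gives the $O(1/T)$ second moment, Borel--Cantelli along $T_n=n^2$, unitarity interpolation, and countability of the dual (from metrizability) finishing the job; your (ii)$\Rightarrow$(i) direction is a short occupation/Fubini argument that bypasses Lemma \ref{lemmadd2} altogether. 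Comparing the two: your method avoids the random-ergodic-theorem machinery and the de-randomization of $\xi$, yields quantitative $L^2$ rates, and—importantly—needs only invertibility of $B_\rho$ (eigenvalues with negative real part decay, purely imaginary nonzero ones average out in the Ces\`aro sense), so it covers non-mixing examples like the deterministic irrational rotation, consistent with Remark \ref{rem4a}; the paper's argument is softer, uses nothing about $G$ beyond uniqueness of Haar measure (no representation theory), and its converse via $H_X$ additionally identifies the limiting occupation measure as a translate of $\lambda_{H_X}$ when (i) fails. Two small points you should make explicit in a write-up: the formula $\hat\mu_t(\rho)=e^{tB_\rho}$ needs continuity of $t\mapsto\hat\mu_t(\rho)$ at $t=0$, which comes from the rcll path assumption (so $\mu_t\to\epsilon_{e_G}$ weakly), not from the semigroup property alone; and the reduction to $X_0=e_G$ should note that $(X_0^{-1}X_t)_{t\ge0}$ is again a L\'evy process with the same $(\mu_t)$ and that left translation by a fixed group element preserves the c.u.d.\ property pathwise by left-invariance of $\lambda_G$.
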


\begin{proof}
In order not to interrupt the main thread, two
auxiliary facts of a technical nature are deferred to the subsequent Lemmas \ref{lemmadd1} and \ref{lemmadd2}.

It will be convenient to formulate the main part of the proof using the terminology of ergodic theory. To this end, 
consider the probability measure $\rho$  defined on $(G \times D, \cB_G \otimes \cB_D)$ by setting, for any
$n\in \N$, $0= t_0 < t_1  < \ldots < t_n$ and $B_0, B_1, \ldots , B_n \in \cB_G$,
$$
\rho \bigl(B_0 \times \{ \gamma \! \in \! D : \gamma (t_j) \! \in \! B_j \: \: \forall j = 1, \ldots , n \}\bigr) :=
\bbp \{ \xi \! \in \! B_0,  X_0^{-1}X_{t_j} \! \! \in \! B_j \: \: \forall  j=1,\ldots , n\} \, ,
$$
where $\xi$ is a random variable, also defined on $(\Omega, \cF , \bbp)$, that is independent of the process 
$X$ and has distribution $\lambda_G$. For every $t\ge 0$, define a map $R_t$ of $G \times D$ into itself by
$$
R_t \bigl(g,\gamma (\bullet) \bigr) = \bigr( g \gamma(0)^{-1} \gamma(t), \gamma(t)^{-1} \gamma (t+\bullet )\bigr) 
\quad \forall (g, \gamma) \in G \times D \, .
$$
Clearly, $R_t$ is measurable, and
\[
\begin{split}
 R_{t_1} & \! \circ \!  R_{t_2} \bigl(g, \gamma (\bullet) \bigr)  = R_{t_1} \left( g \gamma(0)^{-1}\gamma(t_2), 
 \gamma(t_2)^{-1} \gamma (t_2 + \bullet )\right)  \\
& = \Bigl( g \gamma(0)^{-1}\gamma(t_2) \gamma(t_2)^{-1} \gamma (t_2 + t_1 ), \left(\gamma(t_2)^{-1}  
\gamma (t_2 + t_1)\right)^{-1} \!\! \gamma(t_2)^{-1}  \gamma (t_2 + t_1 + \bullet )  \Bigr) \\
& = \Bigl( g \gamma(0)^{-1} \gamma(t_1 + t_2) , \gamma(t_1 + t_2)^{-1} \gamma (t_1 + t_2 + \bullet )\Bigr) \\
& = R_{t_1 + t_2} \bigl( g, \gamma(\bullet) \bigr) \\
\end{split}
\]
holds for all $t_1,t_2\ge 0$. Moreover, since $\xi X_0^{-1}X_t$ has distribution $\lambda_G$ for all 
$t\ge 0$, it follows from the stationarity and independence of increments of $X$ that for any $n\in \N$, $
0=t_0 < t_1 < \ldots < t_n$ and $B_0, B_1, \ldots , B_n \in \cB_G$,
\[
\begin{split}
\rho \! \circ \! R_t^{-1} \! \bigl(B_0  \! & \times \! \{ \gamma : \gamma (t_j) \in B_j \: \: \forall j = 1,  \ldots , n \}\bigr) = \\
& = \bbp \{\xi X_0^{-1} \! X_{t} \in B_0, \; X_t^{-1} \! X_{t+t_j} \in B_j \: \: \forall  j= 1,\ldots , n\} \\
& = \bbp \{\xi \in B_0, \;  X_0^{-1} \! X_{t_j} \in B_j \: \: \forall j= 1,\ldots , n\} \\
& = \rho \bigl(B_0 \! \times \!  \{ \gamma : \gamma (t_j) \in B_j \: \: \forall j = 1, \ldots , n \}\bigr) \, .\\
\end{split}
\]
Thus, $(R_t)_{t\ge 0}$ is a $\rho$-preserving semi-flow. In particular, the stochastic process $(\xi X_0^{-1} X_t)_{t\ge 0}$ 
is {\em stationary} \cite{CFS, krengel}.
Recall that $(R_t)_{t\ge 0}$ is said to be {\em ergodic\/} if $\rho (R_t^{-1} (A) \triangle  A)=0$ 
for $A \in \cB_G \otimes \cB_D$ and all $t\ge 0$ implies that $\rho (A)\in \{0,1 \}$, where, as usual, $\triangle$ 
denotes the symmetric difference of two sets. From Lemma \ref{lemmadd1} below, it follows 
that $(R_t)_{t\ge 0}$ is ergodic if and only if
\begin{equation}\label{eqadd1}
\bbp \bigl\{ \lambda_G (BX_0^{-1} X_t \triangle B) = 0 \bigr\} =1 \quad \forall t\ge 0 
\end{equation}
for some set $B\in \cB_G$ implies that $\lambda_G(B)\in \{0,1\}$. (Note that Lemma \ref{lemmadd1} is required only for the 
``if'' part; the ``only if'' part is straightforward, cf.\ \cite[Theorem 3]{kakutani_50} and 
\cite[Theorem 1]{Ryll-Nardzewski_54}.)


With these preparations, the asserted implication (i)$\Rightarrow$(ii) will now be proved. 
Thus assume (i); that is, assume $\overline{\bigcup_{t\ge 0} S_t}=G$.  
The key step in establishing
(ii) is to check that the semi-flow $(R_t)_{t\ge 0}$ is ergodic in this case. Suppose, therefore, that (\ref{eqadd1}) holds for some set $B\in \cB_G$.
Note that then
$$
\bbp  \bigl\{ \lambda_G (BX_0^{-1} X_{t_n} \triangle B) = 0 \: \: \forall n \in \N \bigr\} =1
$$
holds for every sequence $(t_n)_{n\in \N}$ in $[0, +\infty)$. Specifically, choose $(t_n)_{n\in \N}$ 
such that  $\bigcup_{n\in \N} S_{t_n}$ is dense in $G$. (This is possible due to the separability of $G$.) 
By Fubini's Theorem, $\lambda_G (B h \triangle B)=0$ holds for all $h$ in a dense subset $H_0$ of $G$. 
To establish the ergodicity of $(R_t)_{t\ge 0}$, it remains to demonstrate how this last conclusion 
implies that $\lambda_G (B) \in \{0,1\}$. Assume, therefore, that $\lambda_G(B)>0$ and let $\nu$ be the 
normalized restriction of $\lambda_G$ to $B$; that is,
$$
\nu (A):= \frac{\lambda_G (B \cap A)}{\lambda_G (B)} \quad \forall A \in \cB_G \, .
$$
For $g\in G$, denote by $T_g$ the right-translation by $g$.  Notice that, for every $h\in H_0$,
\[
\nu \circ T_h^{-1} (A) = \frac{\lambda_G (B \cap Ah^{-1})}{\lambda_G(B)} = \frac{\lambda_G (Bh \cap A)}{\lambda_G (B)} 
 = \nu (A) + \frac{\lambda_G (Bh \cap A) - \lambda_G (B\cap A)}{\lambda_G (B)} \, , 
\]
and hence
$$
\left| 
\nu \circ T_h^{-1} (A) - \nu (A)
\right| =
\frac{|\lambda_G(Bh \cap A) - \lambda_G (B\cap A) |}{\lambda_G (B)} \le \frac{\lambda_G (Bh \triangle B)}{\lambda_G (B)} = 0 \, ,
$$
showing that $\nu \circ T_h^{-1} = \nu$. Given any $g \in G$ and $\varphi \in C(G)$, pick a sequence 
$(h_n)_{n\in \N}$ in $H_0$ such that $\lim_{n\to \infty} h_n = g$. Since $\varphi \circ T_{h_n} \to \varphi \circ T_g$ 
uniformly on $G$, it follows by dominated convergence that
\begin{align*}
\int_G \varphi (x) \, {\mathrm d} \nu \circ T_g^{-1}(x) & = \int_G \varphi (xg) \, {\mathrm d} \nu (x) 
= \lim \nolimits_{n\to \infty} \int_G \varphi (xh_n) \, {\mathrm d}\nu (x) \\
& =
\lim \nolimits_{n\to \infty} \int_G \varphi  (x) \, {\mathrm d}\nu (x) = \int_G \varphi (x) \, {\mathrm d} \nu(x) \, . 
\end{align*}
Thus, $\nu$ is invariant under {\em all\/} right-translations, and consequently $\nu = \lambda_G$. In particular, $\lambda_G(B)=1$,
as required. Hence, the semi-flow $(R_t)_{t\ge 0}$ is ergodic.

By the Birkhoff Ergodic Theorem, for every integrable $\varphi : G \to \C$,
$$
\frac1{T} \int_0^T \varphi \bigl( g \gamma(0)^{-1} \gamma(t) \bigr)\, {\mathrm d}t 
\: \stackrel{T \to + \infty}{ \longrightarrow } \: \int_{G \times D} \varphi (g') \, {\mathrm d} \rho (g', \gamma') =
\int_G \varphi (g') \, {\mathrm d} \lambda_G(g') 
$$
holds for $\rho$-a.e. $(g,\gamma)\in G\times D$. In probabilistic terms, this means that 
\begin{equation}\label{eqadd2}
\lim\nolimits_{T\to +\infty} \frac1{T} \int_0^T \varphi (\xi X_0^{-1} X_t) \, {\mathrm d}t =
\int_G \varphi \, {\mathrm d} \lambda_G
\end{equation}
holds with probability one. For any $\varphi \in C(G)$, $g\in G$ and $n\in \N$, denote the set
$$
\left\{
\omega \in \Omega : \limsup\nolimits_{T\to +\infty} \left| 
\frac1{T} \int_0^T \varphi \bigl( g  X_t(\omega) \bigr)\, {\mathrm d}t - \int_G \varphi
\, {\mathrm d}\lambda_G
\right| < \frac1{n}
\right\} \: \in \: \cF
$$
by $\Omega_{\varphi, g, n}$. As $\xi X_0^{-1}$ is Haar-distributed in $G$, $1 = \int_G \bbp (\Omega_{\varphi, g, n})\, {\mathrm d}\lambda_G(g)$ 
for every $n$ by the above, and so $\bbp (\Omega_{\varphi, g, n}) = 1$ for $\lambda_G$-almost every $g\in G$. If $\lambda_G (\{e_G \})>0$ or, 
equivalently, if $G$ is finite, then $\bbp (\Omega_{\varphi, e_{_G} , n}) = 1$ for all $n$, and consequently 
$\bbp \left(\bigcap_{n} \Omega_{\varphi, e_{_G} , n}\right)=1$. If, on the other hand, $e_G$ is not an atom 
of $\lambda_G$ then, by the uniform continuity of $\varphi$, there exists a sequence $(g_n)_{n\in \N}$ in $G$ 
with $\lim_{n\to \infty} g_n = e_G$ such that $\bbp (\Omega_{\varphi, g_n, 2n}) = 1$ and 
$\Omega_{\varphi, g_n, 2n} \subseteq \Omega_{\varphi , e_{_G} , n}$ for all $n$. From
$$
1 = \bbp \left( 
\bigcap\nolimits_{n} \Omega_{\varphi, g_n , 2n} 
\right)
\le
\bbp \left( 
\bigcap\nolimits_{n} \Omega_{\varphi, e_{_G},n} 
\right)
\le 1 \, ,
$$
it is clear that also in this case 
\begin{equation}\label{eqadd3}
\lim\nolimits_{T\to +\infty} \frac1{T} \int_0^T \varphi ( X_t) \, {\mathrm d}t =
\int_G \varphi \, {\mathrm d} \lambda_G
\quad
\mbox{\rm with probability one.}
\end{equation}
Finally, recall that $C(G)$ is separable, and hence taking the intersection of (\ref{eqadd3}) over a dense family 
$\{\varphi_n : n \in \N\}$ in $C(G)$ yields that the paths $t\mapsto X_t$ are, with probability one, c.u.d.\ in $G$. 
Thus, (i) implies (ii).


To show the reverse implication (ii)$\Rightarrow$(i), suppose that (i) does not hold. In this
case, Lemma \ref{lemmadd2} shows that $H_X:= \overline{\bigcup_{t\ge 0} S_t}$ is a proper (compact) subgroup of $G$. 
It then follows from the first part of the proof that the paths of $(X_0^{-1} X_t)_{t \ge 0}$ are c.u.d. in $H_X$. Thus, 
\begin{equation}\label{eqpf3a}
\lim\nolimits_{T\to +\infty} \frac1{T} \int_0^T \varphi \bigl( X_t \bigr)\, 
{\mathrm d}t = \int_{H_X} \varphi(X_0 h) \, {\mathrm d}\lambda_{H_X}(h) \quad \forall \varphi \in C(G)
\end{equation}
holds with probability one. It is straightforward to see that, no matter what the distribution of $X_0$ is, for 
some suitable choice of $\varphi \in C(G)$ the integral on the right of (\ref{eqpf3a}) will not almost surely equal 
$\int_G \varphi(g) \, {\mathrm d}\lambda_G(g)$.
\end{proof}

The following two somewhat technical lemmas have been relied on in the proof of Theorem \ref{thm1}.

\begin{lemma}\label{lemmadd1}
With the notation used in the proof of Theorem \ref{thm1}, let the $\cB_G \otimes \cB_D$-measurable function 
$\psi : G \times D \to \R$ be invariant under the semi-flow $(R_t)_{t\ge0}$; that is, assume that 
$\psi \circ R_t = \psi$ holds $\rho$-a.e.\ for all $t\ge 0$. Then, there exists a $\cB_G$-measurable 
function $\overline{\psi}:G\to \C$ such that $\psi (g,\gamma) = \overline{\psi} (g)$ for $\rho$-a.e.\ 
$(g,\gamma)\in G\times D$.
\end{lemma}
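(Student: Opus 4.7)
The plan is to reinterpret the semi-flow $(R_t)_{t \ge 0}$ on $(G \times D, \rho)$ as the time-shift on the path space of the stationary Markov process $\tilde X_t := \xi X_0^{-1} X_t$, and then to run the standard Markov-property-plus-martingale argument characterising the shift-invariant $\sigma$-algebra of such a process.

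First, because $\xi$ is independent of $X$, one has $\rho = \lambda_G \otimes Q$, where $Q$ denotes the law on $D$ of $(X_0^{-1}X_t)_{t \ge 0}$; in particular $\gamma(0) = e_G$ holds $Q$-a.s. The map $\Phi:G\times D\to D$, $\Phi(g,\gamma)(s) := g\gamma(s)$, is a $\rho$-almost sure bijection onto $D$ intertwining $R_t$ with the time-shift $\theta_t\tilde\gamma(s):=\tilde\gamma(t+s)$ on $D$ and pushing $\rho$ forward to the distribution of $\tilde X_\bullet$. Because $\xi$ is Haar-distributed and independent of the increments of $X$, the process $\tilde X$ is a stationary $G$-valued Markov process with invariant measure $\lambda_G$ and transition semigroup $P_t f(g) = \int_G f(gh)\,{\rm d}\mu_t(h)$. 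A routine truncation $\psi_M := (-M) \vee \psi \wedge M$ preserves $R_t$-invariance and reduces the claim to the case $\psi \in L^2(\rho)$.

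Reading $\psi$ through $\Phi$ as a shift-invariant element of $L^2(\Phi_*\rho)$ and letting $\cF_t := \sigma(\tilde X_s : s \le t)$, the shift-invariance $\psi(\tilde X_\bullet) = \psi(\tilde X_{t+\bullet})$ together with the Markov property of $\tilde X$ yields
\[
\E\bigl[\psi \bigm| \cF_t\bigr] = \E\bigl[\psi(\tilde X_{t+\bullet}) \bigm| \cF_t\bigr] = \overline{\psi}(\tilde X_t), \qquad \overline{\psi}(g) := \int_D \psi(g,\gamma)\,{\rm d}Q(\gamma).
\]
Stationarity $\tilde X_t \sim \lambda_G$ then makes the $L^2$-norm of the martingale $M_t := \overline{\psi}(\tilde X_t)$ constant in $t$, equal to $\|\overline{\psi}\|_{L^2(\lambda_G)}$. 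Since $\cB_D$ is generated by evaluations, $\psi$ is $\cF_\infty$-measurable, so $L^2$-martingale convergence gives $M_t \to \psi$ as $t\to\infty$, forcing $\|\psi\|_{L^2(\rho)} = \|\overline{\psi}\|_{L^2(\lambda_G)} = \|\E[\psi \mid \tilde X_0]\|_{L^2(\rho)}$. The Pythagorean decomposition $\psi = \E[\psi \mid \tilde X_0] + (\psi - \E[\psi \mid \tilde X_0])$ now forces $\psi = \E[\psi \mid \tilde X_0] = \overline{\psi}(\tilde X_0)$ in $L^2(\rho)$, i.e.\ $\psi(g,\gamma) = \overline{\psi}(g)$ for $\rho$-a.e.\ $(g,\gamma)$.

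The main obstacle I expect to be the initial packaging step: verifying $\Phi \circ R_t = \theta_t \circ \Phi$, identifying $\Phi_*\rho$ with the stationary Markov law of $\tilde X$, and combining the shift-invariance of $\psi$ with the Markov property to obtain the identity $\E[\psi \mid \cF_t] = \overline{\psi}(\tilde X_t)$. Once this is in place, the conclusion drops out of the stationarity-induced constancy of the martingale's $L^2$-norm and a standard orthogonality argument.
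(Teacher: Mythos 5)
Your argument is correct and reaches the paper's conclusion by a genuinely different organization of the same two ingredients (independence of increments and martingale convergence along an increasing filtration). The paper stays on $(G\times D,\rho)$ and mimics Ryll-Nardzewski's random ergodic theorem: it shows $\psi=\rho[\psi\,|\,\cG]$ with $\cG=\cB_G\otimes\{\emptyset,D\}$ by testing against products $\alpha\beta$, using independence of the coordinate $\sigma$-algebras to reduce everything to $\rho\bigl[\,\big|\rho[\psi\beta\,|\,\cG]\big|\,\bigr]=0$ for centered $\beta$, and killing that term with the $R_t$-invariance of $\rho$, the invariance of $\psi$, the spliced path $X^{(\gamma,t)}$, and $L^1$ martingale convergence of $\rho[\psi\,|\,\cG\vee\cH_t]$. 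You instead move to the canonical picture of the stationary process $\tilde X_t=\xi X_0^{-1}X_t$, where $R_t$ becomes the shift, derive the exact identity $\E[\psi\,|\,\cF_t]=\overline\psi(\tilde X_t)$ from invariance plus the independence of the future increment path (with law $Q$) from $\cF_t$, and finish with the $L^2$ norm-constancy/Pythagoras argument instead of the paper's $L^1$ estimate. Note that since $\tilde X_0=\xi$, your $\cF_t$ equals $\sigma(\xi,\,X_0^{-1}X_s:s\le t)$, i.e.\ the paper's $\cG\vee\cH_t$, and $\cF_0=\sigma(\xi)$ is the $\sigma$-algebra of the $G$-coordinate, so your conclusion $\psi=\overline\psi(\tilde X_0)$ is exactly the paper's $\psi=\rho[\psi\,|\,\cG]$, with the same explicit $\overline\psi(g)=\int_D\psi(g,\gamma)\,{\mathrm d}Q(\gamma)$. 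Your route buys a cleaner conceptual statement (shift-invariant functionals of this stationary Markov process are functions of the initial state) and a slicker ending; its cost is the packaging you flag yourself: checking $\Phi\circ R_t=\theta_t\circ\Phi$ on the $\rho$-full, $R_t$-invariant set $\{\gamma(0)=e_G\}$, the freezing/grouping argument showing $(X_t^{-1}X_{t+\bullet})$ is independent of $\sigma(\xi,X_s:s\le t)$, and the passage from the bounded case back to general measurable $\psi$ (e.g.\ take $\overline\psi:=\limsup_M\overline{\psi_M}$). These are routine, so the proof stands.
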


\begin{proof}
The argument given here mimics the proof of \cite[Theorem 1]{Ryll-Nardzewski_54}.
Write expectations and conditional expectations with respect to $\rho$ as $\rho[\bullet]$ and $\rho[\bullet \, | \, \bullet]$,
respectively. Put $\mathcal{G} := \cB_G \otimes \{\emptyset, D\}$ and $\mathcal{H} := \{\emptyset,G\} \otimes \cB_D$.
Since there exists, for any bounded $\cB_G \otimes \cB_D$-measurable 
function $\psi$, a $\cB_G$-measurable function $\overline \psi: G \to \mathbb{R}$
such that $\rho[\psi \, | \, \mathcal{G}](g,\gamma) = \overline \psi(g)$ for $\rho$-a.e.\ $(g,\gamma) \in G \times D$, 
it suffices to show for any bounded invariant function $\psi$ that $\psi = \rho[\psi \, | \, \mathcal{G}]$ $\rho$-a.e..
By a monotone class argument, 
this is equivalent to showing for any bounded invariant
function $\psi$, any bounded $\mathcal{G}$-measurable function $\alpha$,
and any bounded $\mathcal{H}$-measurable function $\beta$ that
\begin{equation}
\label{exp_psi_alpha_beta}
\rho\left[\psi \, \alpha \, \beta\right] 
= 
\rho \bigl[\rho \left[\psi \, | \, \mathcal{G} \right] \, \alpha \, \beta \bigr] \, .
\end{equation}
Note that due to the independence of the sub-$\sigma$-algebras $\mathcal{G}$
and $\mathcal{H}$ under $\rho$ the right-hand side of
\eqref{exp_psi_alpha_beta} is 
\[
\rho \bigl[\rho \left[\psi \, | \, \mathcal{G} \right] \, \alpha \bigr] \; \rho\left[\beta \right]
=
\rho \left[\psi \, \alpha \right] \; \rho\left[\beta \right]\, ,
\]
and so it further suffices to show for any bounded $\mathcal{H}$-measurable
function $\beta$ with $\rho[\beta]=0$ that $\rho\left[\psi \, \alpha \, \beta\right] = 0$
for any bounded $\mathcal{G}$-measurable function $\alpha$. Since
\[
\rho\left[\psi \, \alpha \, \beta\right]
= \rho\bigl[\rho\left[\psi \, \beta \, | \, \mathcal{G}\right] 
\, \alpha \bigr],
\]
this is equivalent to establishing
\begin{equation}
\label{exp_psi_beta_zero}
\rho\bigl[
\big| \rho\left[\psi \, \beta \, | \, \mathcal{G}\right] \big| 
\bigr] = 0 \, .
\end{equation}
Note also that $\rho\bigl[\big| \rho\left[\psi \, \beta \, | \, \mathcal{G}\right] \big| \bigr]
= \rho\bigl[ \big| \rho \left[\psi \, \beta \, | \, \mathcal{G} \right] \circ R_t \big|\bigr]$
for all $t \ge 0$ because $R_t$ preserves the measure $\rho$. Moreover, observe that
\[
\rho\left[\psi \, \beta \, | \, \mathcal{G}\right](g,\gamma)
=
\mathbb{E} \left[\psi(g, X_0^{-1} X_{\bullet}) \,
\overline \beta(X_0^{-1} X_{\bullet})\right]\, ,
\]
where $\overline \beta$ is a $\cB_D$-measurable function such that $\beta(g,\gamma) = \overline \beta(g)$ 
for $\rho$-a.e.\ $(g,\gamma) \in B \times D$.  By definition of $R_t$ and the stationary increments property of $X$,
\[
\begin{split}
\rho \left[\psi \, \beta \, | \, \mathcal{G} \right] \circ R_t(g, \gamma) 
& =
\mathbb{E} \left[\psi(g \gamma(0)^{-1} \gamma(t), X_0^{-1} X_{\bullet}) 
\, \overline \beta(X_0^{-1} X_{\bullet})\right] \\
& =
\mathbb{E} \left[\psi(g \gamma(0)^{-1} \gamma(t), X_t^{-1} X_{t+\bullet}) 
\, \overline \beta(X_t^{-1} X_{t+\bullet})\right]\, . \\
\end{split}
\]
Since $\psi$ is invariant, $\psi(g,\gamma) = \psi\bigl( g \gamma(0)^{-1} \gamma(t), \gamma(t)^{-1} \gamma (t+\bullet )\bigr)$, 
and hence
\[
\begin{split}
\rho \left[\psi \, \beta \, | \, \mathcal{G} \right] \circ R_t(g, \gamma) 
=
\mathbb{E} \left[\psi(g, X_\bullet^{(\gamma,t)})
\, \overline \beta(X_t^{-1} X_{t+\bullet})\right] \, , \\
\end{split}
\]
where
\[
X_s^{(\gamma,t)} := 
\begin{cases}
\gamma(s) & \mbox{\rm if } 0 \le s < t \, ,\\
\gamma(t) X_t^{-1} X_{s} & \mbox{\rm if } s \ge t \, .\\
\end{cases}
\]
For every $t \ge 0$, let $\mathcal{H}_t$ be the sub-$\sigma$-algebra of $\cB_G \otimes \cB_D$ generated by
the maps $(g,\gamma) \mapsto \gamma(s)$, $0 \le s \le t$, and denote, as usual, by $\cG \vee \cH_t$ the $\sigma$-algebra
generated by $\cG \cup \cH_t$. Since $\rho \left[\psi \, | \, 
\mathcal{G} \vee \mathcal{H}_t\right]
(g, X_\bullet^{(\gamma,t)}) = \rho \left[\psi \, | \, 
\mathcal{G} \vee \mathcal{H}_t\right]
(g, \widetilde \gamma)$ for any $\widetilde \gamma \in D$ such that $\widetilde \gamma(s) = \gamma(s)$ for $0 \le s \le t$,
it follows that
\[
\begin{split}
\mathbb{E} \left[\rho \left[\psi \, | \, 
\mathcal{G} \vee \mathcal{H}_t\right]
(g, X_\bullet^{(\gamma,t)}) \,
\overline \beta(X_t^{-1} X_{t+\bullet}) \right]
& =
\rho \left[\psi \, | \, 
\mathcal{G} \vee \mathcal{H}_t\right]
(g, \gamma)
\mathbb{E} \left[\overline \beta(X_t^{-1} X_{t+\bullet}) \right] \\
& =
\rho \left[\psi \, | \, 
\mathcal{G} \vee \mathcal{H}_t\right]
(g, \gamma) \, \rho[\beta] \\
&= 0 \, . \\
\end{split}
\]
Also, by the independent increments property of $X$ and the Martingale Convergence Theorem,
\[
\begin{split}
& \int_{G \times D}
\mathbb{E} \left[
\left|
\rho \left[\psi \, | \, 
\mathcal{G} \vee \mathcal{H}_t\right]
(g, X_\bullet^{(\gamma,t)})
-
\psi(g, X_\bullet^{(\gamma,t)})
\right|
\right]
\, \mathrm{d} \rho(g,\gamma) \\
& \quad =
\int_{G \times D}
\left|
\rho \left[\psi \, | \, 
\mathcal{G} \vee \mathcal{H}_t\right]
(g, \gamma)
-
\psi(g, \gamma)
\right|
\, \mathrm{d} \rho(g,\gamma)
\: \stackrel{t \to \infty}{\longrightarrow} \: 0 \, . \\
\end{split}
\]
Hence,
\begin{align*}
\rho \bigl[ \big| \rho  [ & \psi \, \beta \,  | \, \mathcal{G} ] \big| \bigr]  =
\rho \bigl[ \big| \rho \left[\psi \, \beta  \,   | \, \mathcal{G} \right] \circ R_t \big| \bigr] 
 = \int_{G \times D} \!
\left| 
\mathbb{E} \left[\psi(g, X_\bullet^{(\gamma,t)}) \, \overline \beta(X_t^{-1} X_{t+\bullet})\right]
\right|
\, \mathrm{d}\rho(g,\gamma) \\
& =
\int_{G \times D}
\biggl|
\mathbb{E} \left[\psi(g, X_\bullet^{(\gamma,t)})
\, \overline \beta(X_t^{-1} X_{t+\bullet})\right] \\
& \qquad \qquad -
\mathbb{E} \left[\rho \left[\psi \, | \, 
\mathcal{G} \vee \mathcal{H}_t\right]
(g, X_\bullet^{(\gamma,t)}) \,
\overline \beta(X_t^{-1} X_{t+\bullet}) \right]
\biggr|
\, \mathrm{d}\rho(g,\gamma) \\
& \le
\|\beta\|_\infty
\int_{G \times D}
\mathbb{E} \left[ \left|
\psi(g, X_\bullet^{(\gamma,t)}) - \rho \left[\psi \, | \, \mathcal{G} \vee \mathcal{H}_t\right]
(g, X_\bullet^{(\gamma,t)})
\right|
\right]
\, \mathrm{d} \rho(g,\gamma) 
\:  \stackrel{t \to \infty}{\longrightarrow} \: 0 \, , 
\end{align*}
which in turn shows that \eqref{exp_psi_beta_zero} holds.  
Therefore, for each invariant function $\psi$ 
there does indeed exist a $\cB_G$-measurable function $\overline \psi: G \to \mathbb{R}$
such that $\psi(g,\gamma) = \overline \psi(g)$ 
for $\rho$-a.e. $(g,\gamma) \in G \times D$.
\end{proof}

\begin{lemma}\label{lemmadd2}
For every L\'evy process in the metrizable compact group $G$, the set $\overline{\bigcup_{t\ge 0} S_t}$ is a subgroup of $G$.
\end{lemma}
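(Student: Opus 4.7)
The plan is to show that $\bigcup_{t\ge 0} S_t$ is a sub-semigroup of $G$ containing the identity, so that $H := \overline{\bigcup_{t\ge 0} S_t}$ is a closed sub-semigroup of $G$, and then to invoke the classical topological-group fact that every closed sub-semigroup of a compact group is automatically a subgroup. This last step is where the compactness hypothesis on $G$ is used essentially.

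For the sub-semigroup part, I would make two observations. First, $\mu_0 = \epsilon_{e_G}$ because $X_0^{-1} X_0 = e_G$, so $e_G \in S_0 \subseteq H$. Second, the convolution semigroup identity $\mu_{s+t} = \mu_s \ast \mu_t$, together with the standard formula $\mathrm{supp}(\nu_1 \ast \nu_2) = \overline{\mathrm{supp}(\nu_1)\cdot \mathrm{supp}(\nu_2)}$ for probability measures on a topological group, yields $S_{s+t} = \overline{S_s \cdot S_t}$. In particular, $g \in S_s$ and $h \in S_t$ imply $gh \in S_{s+t} \subseteq \bigcup_{u \ge 0} S_u$, and a routine approximation using the joint continuity of multiplication passes this through the closure to give $H\cdot H \subseteq H$. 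By iterating, one also obtains $g^n \in S_{nt}$ for every $g \in S_t$ and every $n \ge 1$.

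The substantive step is showing $H^{-1} \subseteq H$. Fix $g \in \bigcup_{t \ge 0} S_t$, say $g \in S_t$, with $g \ne e_G$. The previous observation shows $(g^n)_{n\ge 1} \subseteq \bigcup_{u \ge 0} S_u$. By compactness of $G$, some subsequence $g^{n_k}$ converges to an element $h \in G$. Equipping $G$ with a left-invariant compatible metric $d$ (which exists by metrizability), one has $d(g^{n_k - n_j}, e_G) = d(g^{n_k}, g^{n_j})$ for $k > j$, so a pigeonhole argument yields a sequence $(m_i)_{i \in \N}$ of positive integers with $g^{m_i} \to e_G$. Since $g \ne e_G$, eventually $m_i \ge 2$, and then $g^{m_i - 1} = g^{m_i} g^{-1} \to g^{-1}$ while $g^{m_i - 1} \in S_{(m_i - 1)t} \subseteq \bigcup_{u \ge 0} S_u$. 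Hence $g^{-1} \in H$. A density argument, again using continuity of inversion, extends this to arbitrary $g \in H$.

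The main obstacle is the inversion step, but it rests on a well-known trick; the rest is bookkeeping around the convolution-semigroup property. Compactness of $G$ is used in two essential places: to guarantee that the orbit $\{g^n : n \ge 1\}$ clusters in $G$ (so that $e_G$ and hence $g^{-1}$ can be reached as a limit of elements of $\bigcup_{u \ge 0} S_u$), and implicitly through the metrizability that makes the pigeonhole comparison of powers available. Without compactness the statement simply fails (consider a non-trivial drift in $\R$).
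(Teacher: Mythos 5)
Your proof is correct and follows essentially the same route as the paper: the convolution identity $\mu_{t_1}\ast\mu_{t_2}=\mu_{t_1+t_2}$ makes $\overline{\bigcup_{t\ge 0}S_t}$ a closed subsemigroup, and compactness of $G$ together with an invariant metric then forces closure under inversion. The only cosmetic difference is in that last step: the paper argues by contradiction that $(h^ng)_{n\in\N}$ could not cluster if $g\in H_X\setminus hH_X$, concluding $hH_X=H_X$, whereas you produce $g^{-1}$ directly as the limit of the returns $g^{m_i-1}$ with $g^{m_i}\to e_G$ --- two standard variants of the same ``closed subsemigroup of a compact group is a group'' argument.
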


\begin{proof}
For convenience, denote $\overline{ \bigcup_{t\ge 0} S_t }$ by $H_X$. 
Note that $\mu_{t_1} \ast \mu_{t_2} = \mu_{t_1 + t_2}$ implies
$$
S_{t_1} S_{t_2} := \{ g_1 g_2 : g_j \in S_{t_j} \: \mbox{\rm for } j=1,2 \} = S_{t_1 + t_2} \quad \forall t_1, t_2 \ge 0 \, .
$$
It follows that $h_1h_2\in H_X$ whenever $\{h_1, h_2\}\subset H_X$. Given any $h\in H_X$, therefore,
$h H_X \subseteq H_X$. To see that $h H_X = H_X$, first choose a metric $d$ on $G$ that is invariant 
under all left- as well as right-translations. (Such a metric exists, see, for example, \cite[\S0.6]{walters_82}.)
If $g \in H_X \setminus h H_X$, then $d(g, h^n g) \ge \min\{d(g, h h') : h' \in H_X\} > 0$ for every $n\in \N$, 
and in particular $d(h^m g, h^n g) = d(g, h^{n-m} g)$ is bounded away from zero for $m,n \in \N$, $n>m$. 
Consequently, the sequence $(h^{n}g)_{n\in \N}$ does not contain any convergent subsequence, contradicting
the compactness of $G$. Hence $hH_X = H_X$, and it is clear that $\{e_G, h^{-1}\}\subset H_X$. 
Since $h\in H_X$ was arbitrary, the set $H_X$ is indeed a subgroup.
\end{proof}

If $(X_t)_{t\ge 0}$ is a L\'evy process with $X_0 = e_G$ and $h>0$, then $X_{nh}$ is, for every $n\in \N$, the 
product of $n$ independent random variables, all of which have the same distribution as $X_h$. Conversely, let 
$(\xi_n)_{n\in \N}$ be an i.i.d.\ sequence in the metrizable compact group $G$. Denote by $S$ the support of
the common distribution of the $\xi_n$, $n\ge 1$, and, for every $n\in \N$, let $S^n = \{ g_1 \cdots g_n : g_j \in S \;\,
\mbox{\rm for } j=1,\ldots, n\}$. If $(\tau_n)_{n \in \N}$ is a sequence of i.i.d.\ exponential random variables
that is independent of $(\xi_n)_{n\in \N}$, then the process $(X_t)_{t\ge 0}$ defined by $X_t = e_G$ for 
$0 \le t < \tau_1$ and $X_t = \xi_1  \cdots \xi_n$ for $\tau_1 + \cdots + \tau_n \le t < \tau_1 + \cdots + \tau_{n+1}$
is a L\'evy process. The following discrete-time analogue is, by the latter observation, immediate from
Theorem \ref{thm1} and the Strong Law of Large Numbers applied to the random variables $(\tau_n)_{n \in \N}$.

\begin{corollary}\label{cor4}
Let $(\xi_n)_{n\in \N}$ be an i.i.d.\ sequence in the metrizable compact group $G$ with the common 
distribution having support $S$. Then, the sequence $(\xi_1 \cdots \xi_n)_{n\in \N}$ is, with probability one, 
u.d.\ in $G$ if and only if $\overline{\bigcup_{n \in \N} S^n} = G$.
\end{corollary}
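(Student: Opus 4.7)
The plan is to reduce this discrete-time statement to Theorem \ref{thm1} via the interpolation recalled in the paragraph preceding the corollary. Given the i.i.d.\ sequence $(\xi_n)_{n\in \N}$ and an independent i.i.d.\ sequence $(\tau_n)_{n\in \N}$ of rate-one exponential random variables, I would form the L\'evy process $X=(X_t)_{t\ge 0}$ with $X_0 = e_G$ and $X_t = \xi_1\cdots \xi_n$ for $\tau_1+\cdots+\tau_n \le t < \tau_1+\cdots+\tau_{n+1}$; equivalently, $X_t = \xi_1 \cdots \xi_{N_t}$, where $N_t$ is the Poisson-$(t)$ counting process of the jump times.

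The first task is to identify the supports $S_t$ of the marginals $\mu_t$. Letting $\nu$ denote the common law of the $\xi_k$ (so $\mathrm{supp}(\nu) = S$), one has
\[
\mu_t \: = \: \sum\nolimits_{n=0}^\infty \frac{e^{-t} t^n}{n!}\, \nu^{*n} \qquad \forall t \ge 0 \, ,
\]
with $\nu^{*0} = \epsilon_{e_G}$. Since $S^n$ is the image of the compact set $S\times \cdots \times S$ under the continuous multiplication map it is already closed, and therefore equals $\mathrm{supp}(\nu^{*n})$. As all mixture weights are strictly positive for every $t > 0$, this gives $S_t = \overline{\{e_G\}\cup \bigcup_{n\in \N} S^n}$ for each such $t$, and hence
\[
\overline{\textstyle \bigcup_{t\ge 0} S_t} \: = \: \{e_G\} \cup \overline{\textstyle \bigcup_{n\in \N} S^n} \, .
\]
By Lemma \ref{lemmadd2} the left-hand side is a subgroup of $G$, so it equals $G$ if and only if $\overline{\bigcup_{n\in \N} S^n} = G$. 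Theorem \ref{thm1} then shows that this last condition is equivalent to the paths of $X$ being c.u.d.\ in $G$ with probability one.

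It remains to transfer this continuous-time statement to the partial products $S_n^\xi := \xi_1 \cdots \xi_n$. For any $\varphi \in C(G)$, splitting the integral on the constancy intervals of $X$ gives
\[
\frac{1}{T}\int_0^T \varphi (X_t)\, \mathrm{d}t \: = \: \frac{1}{T}\sum\nolimits_{n=0}^{N_T - 1} \tau_{n+1}\varphi (S_n^\xi) \: + \: O(1/T) \, .
\]
The strong law gives $N_T / T \to 1$ a.s.; applied conditionally on $(\xi_n)$ to the mean-zero, independent, uniformly bounded random variables $(\tau_{n+1}-1)\varphi(S_n^\xi)$, it also yields $\frac{1}{N}\sum_{n=0}^{N-1} (\tau_{n+1}-1)\varphi(S_n^\xi) \to 0$ a.s. Combining the two, $\frac{1}{T}\int_0^T \varphi(X_t)\,\mathrm{d}t$ and $\frac{1}{N_T}\sum_{n=0}^{N_T -1}\varphi(S_n^\xi)$ differ by $o(1)$ a.s.; since $N_{T_n} = n$, convergence along the random time $N_T$ is equivalent to convergence along $n\in \N$, and intersecting over a countable dense family in $C(G)$ upgrades the $\varphi$-by-$\varphi$ equivalence to a single event of full probability. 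The main care point is precisely this SLLN-conversion: the coupling between $(\tau_n)$ and $(\xi_n)$ must be handled so that the two-directional equivalence between path-averages and discrete Ces\`aro averages passes through both the random time $N_T$ and the separability of $C(G)$. The support computation in the first step is, by contrast, essentially immediate from the convolution-semigroup structure.
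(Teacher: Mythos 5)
Your argument is correct and is essentially the paper's own proof: the paragraph preceding the corollary constructs exactly this exponential-holding-time (compound Poisson) interpolation and declares the statement immediate from Theorem \ref{thm1} together with the strong law applied to $(\tau_n)_{n\in \N}$, and you are simply filling in the support computation and the passage between continuous-time and discrete averages. Only cosmetic repairs are needed: the summands $(\tau_{n+1}-1)\varphi(\xi_1\cdots \xi_n)$ are not uniformly bounded, so invoke Kolmogorov's strong law via their uniformly bounded variances (conditionally on $(\xi_n)_{n\in\N}$); the boundary term is $o(1)$ rather than $O(1/T)$; and the equivalence $\overline{\bigcup_{t\ge 0} S_t}=G \Longleftrightarrow \overline{\bigcup_{n\in\N} S^n}=G$ is best justified by noting that $e_G \in \overline{\{g^n : n\in \N\}}$ for any $g\in S$ (the same compactness argument as in the proof of Lemma \ref{lemmadd2}), rather than by the subgroup property of the left-hand side alone.
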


\begin{example}\label{exa5}
Let $Y= (Y_t)_{t\ge 0}$ be a L\'evy process in the (non-compact, Abelian) group $\R$ with the usual 
topology. According to the classical L\'evy--Khintchine formula \cite[Theorem 1.2.14]{applebaum_09}, 
$\E [e^{iy Y_t}]= e^{t\eta (y)}$ for all $t\ge 0$ and $y\in \R$, where
\begin{equation}\label{eqlk}
\eta (y) = i \beta y - {\textstyle \frac12} \sigma^2 y^2 + \int_{\R} \left( e^{ixy} - 1 - i xy {\bf 1}_{(-1,1)}(x)\right) {\mathrm d}\nu (x) \, ,
\end{equation} 
with $\beta \in \R$, $\sigma^2 \ge 0$, and $\nu$ a Borel measure on $\R$ that satisfies
$\nu (\{ 0 \})=0$ and $\int_{\R} y^2 \wedge 1  \, {\mathrm d}\nu (y) < + \infty$. The triple 
$(\beta ,\sigma^2 , \nu)$ uniquely determines $Y$.

For any $y\in \R$, denote by $\langle y \rangle$ the fractional part of $y$, that is, 
$\langle y \rangle = y - \lfloor y \rfloor$. Set $X_t = \langle Y_t \rangle$ for all $t\ge 0$. 
Clearly, $X$ is a L\'evy process in the compact (Abelian) group $\T=\R/\Z$. From (\ref{eqlk}) 
and Theorem \ref{thm1} it is readily deduced that the paths of $X$ are, with probability one, 
c.u.d.\ in $\T$ unless simultaneously $\sigma^2 = 0$ (i.e., $Y$ has no Gaussian component), 
$\nu ( \R \setminus \frac1{m} \Z) = 0$ for some $m\in \N$ (i.e., $\nu$ is concentrated on 
the lattice $\frac1{m}\Z = \{ \frac{k}{m} : k \in \Z\}$), and $\beta = \sum_{|k|< m} \frac{k}{m}\nu (\{ \frac{k}{m}\})$. 
In the latter case, assuming that $m$ is chosen to be minimal, the paths of $X$ are c.u.d.\
in the closed subgroup $\langle \frac1{m} \Z \rangle$ of $\T$, that is, with probability one,
$$
\lim \nolimits_{T\to +\infty} \frac1{T} \int_0^T \varphi( X_t) \, {\mathrm d}t =
\frac{1}{m}
\sum\nolimits_{j=0}^{m-1} \varphi \left( \textstyle \frac{j}{m}\right) \quad
\forall \varphi \in C (\T) \, .
$$

Note that with $\vartheta \in \R$ and $\bbp \{\xi_1 = \vartheta\}=1$, Corollary \ref{cor4} contains 
the well known fact that $\bigl(\langle n\vartheta\rangle \bigr)_{n\in \N}$ is u.d.\ in $\T$ if and 
only if $\vartheta$ is irrational.
\end{example}

\begin{example}\label{exa5a}
If $G$ is merely {\em locally\/} compact then (\ref{eq0}) will usually not hold for almost all 
paths of a L\'evy process $X$ in $G$, even when both sides exist, perhaps for some appropriate
subspace of $C(G)$. However, Example~\ref{exa5}  can be extended in a way that not only 
highlights the role played by the compactness of $G$, but also provides a new perspective on 
\cite[Theorem 1]{robbins_53}. 

Let $Y$ again be a L\'evy process in $\R$, with characteristic triple $(\beta, \sigma^2, \nu)$, 
and fix a bounded continuous function $f:\R \to \C$. To avoid trivialities, assume $f$ is non-constant. 
Theorem \ref{thm1} can be used to show that $\lim\nolimits_{T\to + \infty}  \frac1{T} \int_0^T f(Y_t) \, {\mathrm d}t$ 
does exists with probability one, provided that $f$ is {\em almost periodic\/} (or {\em a.p.} for short). Recall that 
$f$ is a.p.\ if, for every $\varepsilon > 0$, there exists a set $P_{\varepsilon}\subseteq \R$ which is
relatively dense (i.e., $P_{\varepsilon}$ ``has bounded gaps'') such that
$$
\sup\nolimits_{y\in \R} |f(y+p) - f(y)| < \varepsilon \quad \forall p\in P_{\varepsilon} \, .
$$
It is well known that $f$ is a.p.\ if and only if the closure $H_f$ of the family $\{ f(y + \bullet) : y \in \R\}$ 
is compact in $C_b (\R)$, the Banach space of bounded continuous complex-valued functions on $\R$ equipped with
the supremum norm. (Usually, $H_f$ is referred to as the {\em hull\/} of $f$, see, for example, \cite{fink}.)  Moreover,
the average
$$
A(f) := \lim\nolimits_{T \to + \infty} \frac1{2T} \int_{-T}^T f(y) \, {\mathrm d} y
$$
exists for every a.p.\ function $f$. The addition 
$$
f(y_1 + \bullet) + f (y_2 + \bullet) := f(y_1 + y_2 + \bullet) \quad \forall y_1, y_2 \in \R \, ,
$$
extends continuously to $H_f$, turning the latter into a metrizable compact (Abelian) group. Clearly,
$e_{H_f}= f$, and the Haar measure on $H_f$ is uniquely determined by the requirement that
$$
\int_{H_f} \varphi \, {\mathrm d} \lambda_{H_f} = A(\varphi \diamond f) \quad \forall \varphi \in C(H_f) \, ,
$$
where $\varphi \diamond f$ denotes the a.p.\ function $y\mapsto \varphi \bigl(  f(y+\bullet )\bigr)$.
With these preparations, define a process $X$ in $H_f$ by simply setting $X_t = f(Y_t + \bullet)$ for all $t\ge 0$.
It is readily confirmed that $X$ is a L\'evy process. (Note that Example \ref{exa5} simply
corresponds to the special case of $f$ being periodic with period $1$, in which case $H_f$ is homeomorphic
and isomorphic to $\T$, and $A(f) = \int_0^1 f(y) \, {\mathrm d}y$.) Observe that $H_X=H_f$ unless
$\sigma^2 =0$ and $\nu (\R \backslash a \Z)=0$ for some $a>0$. When $H_X=H_f$, Theorem \ref{thm1} 
implies that, with probability one,
$$
\frac1{T} \int_0^T  \!\! \varphi (X_t) \, {\mathrm d} t = \frac1{T} \int_0^T \!\! \varphi \bigl(  f (Y_t + \bullet) \bigr)\, {\mathrm d} t \:
\stackrel{T\to +\infty}{\longrightarrow} \int_{H_f} \!\! \varphi \, {\mathrm d} \lambda_{H_f} = A(\varphi \diamond f) \quad \forall \varphi \! \in \! C(H_f) \, .
$$
In particular, choosing $\varphi (g):= g(0)$ for all $g \in H_f$ yields $\varphi \diamond f = f$ and consequently
\begin{equation}\label{eqap}
\frac1{T} \int_0^T f(Y_t) \, {\mathrm d}t \: \stackrel{T\to +\infty}{\longrightarrow} \: A(f) \quad
\mbox{\rm with probability one} \, .
\end{equation}
For every a.p.\ function $f$, therefore, (\ref{eqap}) holds for any L\'evy process $Y$ on $\R$ provided that
$Y$ either has a non-zero Gaussian component or else the associated measure $\nu$ is not concentrated on a lattice.
\end{example}

\begin{example}\label{exa7}
As an application of Theorem~\ref{thm1} and Corollary~\ref{cor4}, let $b\ge 2$ be a positive
integer and recall that a measurable function $f:[0,+\infty) \to \R$ is $b$-{\em Benford\/} if
$\log_b |f|$ is c.u.d.\ in $\T$, where $\log_b$ denotes the base-$b$ logarithm and the
convention $\log_b 0:=0$ is adopted for convenience, see \cite{BH} for background
information and further details on the Benford property as well as its ramifications.
Equivalently, the function $f$ is $b$-Benford if
$$
\lim\nolimits_{T\to +\infty} \frac{\mbox{\rm Leb} \bigl\{ t \in [0,T) : 
S_b\bigl( f( t )\bigr) \le s \bigr\} }{T} = \log_b s \quad \forall s \in [1,b) \, ,
$$
where $S_b(y)$, the base-$b$ {\em significand\/} of $y\in \R$ is, by definition, the unique
number in $\{0\} \cup [1,b)$ such that $|y| = S_b(y) b^k$ for some integer $k$.  Similarly, 
a sequence $(y_n)_{n\in \N}$ in $\R$ is called $b$-Benford whenever the function 
$t\mapsto y_{\lfloor t  \rfloor + 1}$ is $b$-Benford.

Let $Y$ be a L\'evy process in $\R$ with characteristic triple $(\beta , \sigma^2 , \nu)$ and,
for any real constants $a\ne 0$, $c\ne 0$, and $d$, consider 
$$
X_t = a e^{cY_t + dt} \, , \quad t \ge 0 \, .
$$
Since $(cY_t + dt)_{t\ge 0}$ is again a L\'evy process, it follows from Theorem~\ref{thm1}
that $t\mapsto X_t$ is $b$-Benford with probability one unless
\begin{equation}\label{eq3}
\sigma^2 = 0  \quad \mbox{\rm and} \quad \nu \left( \R \backslash {\textstyle \frac{\ln b}{|c|m}} \Z\right) = 0
\enspace
\mbox{for some } m\in \N \, .
\end{equation}
Note that (\ref{eq3}) does not hold if $\nu$ is non-atomic. In particular, the paths of any 
{\em geometric Brownian motion\/} (also referred to as a {\em Black--Scholes process\/}), 
corresponding to the case where $Y$ is a standard Brownian motion, are almost surely $b$-Benford 
for all $b$. Similarly, if $Y$ is a Poisson process then the paths $t\mapsto a e^{cY_t + dt}$ are, 
with probability one, $b$-Benford unless $c$ is a rational multiple of $\ln b$, cf.\ \cite{schuerger_08}.

For a discrete-time analogue of these observations, let $(\xi_n)_{n\in \N}$ be an i.i.d.\ sequence in $\R$. 
By Corollary \ref{cor4}, the sequence $\left( \prod_{j=1}^n \xi_j \right)_{n\in \N}$ is $b$-Benford with
probability zero or one, depending on whether or not the support of the distribution of $\:\! (\log_b |\xi_1|)$
is contained in $\frac1{m} \Z$ for some $m\in \N$, cf.\ \cite{ross_10}.
\end{example}

\section{Further remarks and observations}

The following remarks aim at providing some background information that may help the reader putting the main 
results of this note, Theorem \ref{thm1} and Corollary \ref{cor4}, in perspective.

\begin{remark}
The notion of continuous uniform distribution applies to all measurable functions (paths), not only to
those that are rcll. Hence it may seem natural to consider the class of stochastic processes that arises
by replacing assumption (iii) in Definition~\ref{def:Levy} with the weaker requirement that the map 
$(t,\omega) \mapsto X_t(\omega)$ be jointly measurable. As the following argument shows, nothing is
gained from this seemingly greater generality.

By \cite[Theorem 1]{dr_02}, joint measurability implies the existence of closed sets $F \subseteq [0,1]$
with Lebesgue measure arbitrarily close to $1$ such that for all $\varepsilon > 0$
\[
\lim\nolimits_{\delta \downarrow 0} \sup\nolimits_{|t_2-t_1| \le \delta, \, t_1,t_2\in F} \bbp\{d(X_{t_1}, X_{t_2}) 
> \varepsilon\} = 0 \, ,
\]
where $d$ is a translation-invariant metric on $G$. Observe that if $t_2 > t_1$, then
\[
\bbp\{d(X_{t_1}, X_{t_2}) > \varepsilon\} 
 = \bbp\{d(e_G, X_{t_1}^{-1} X_{t_2}) > \varepsilon\} 
 = \mu_{t_2-t_1}\bigl( \{g \in G : d(e_G,g) > \varepsilon\}\bigr) \, . 
\]
A celebrated theorem of Steinhaus \cite[Th\'eor\`eme VIII]{Steinhaus_20} (see also 
\cite{ Bingham_Ostaszewski_11, Kestelman_47}) asserts that the set $\{t_2-t_1 : t_1,t_2 \in F\}$ contains 
an open neighbourhood of $0$ whenever $F$ has positive Lebesgue measure. Thus, $\mu_t$
converges to $\epsilon_{e_G}$ as $t \downarrow 0$.  Now, define a strongly continuous contraction
semigroup of operators $(P_t)_{t \ge 0}$ on $C(G)$ by setting $P_0\varphi = \varphi$ and
$$
P_t \varphi = \int_G \varphi (\bullet \, g ) \, {\mathrm d}\mu_t(g) \quad
\forall t>0 \,\; \mbox{\rm and } \varphi \in C(G) \, .
$$
It follows from the theory of Feller semigroups (see, for example, \cite[Section III.7]{Rogers_Williams_94})
that by modifying each random variable $X_t$ on a $\bbp$-null set it is possible to produce a stochastic
process with rcll paths. By Fubini's Theorem, the value of $\int_0^T \varphi(X_t) \, {\mathrm d}t$, 
$T \ge 0$, remains unchanged if $X$ is replaced by such an rcll modification.
\end{remark}

\begin{remark}
\label{rem:Harris}
L\'evy processes are a special class of Markov processes, and so it is natural to inquire whether 
Theorem~\ref{thm1} is a consequence of more general results in the vast Markov process literature. The 
proof given above certainly uses the extra L\'evy structure: The fact that the distribution of 
$(X_t)_{t \ge 0}$ when $X_0 = g$ is equal to the distribution of $(g X_t)_{t \ge 0}$ when $X_0 = e_G$
reduces checking the ergodicity of 
$(\xi X_0^{-1} X_t)_{t \ge 0}$ to verifying a criterion involving only the right-translations
by $h\in H_X$. Moreover, the fact that the state space is a compact group permits the latter 
verification to be reduced to the uniqueness of normalized Haar measure on such a group.

A discussion of limit theorems for the occupation measures of discrete-time Markov processes
is given in \cite[Chapter 17]{Meyn_Tweedie_93} under the assumption that the process is 
{\em Harris recurrent}.  Similar results for continuous-time
processes are obtained in \cite[Paragraphe II]{AKR_67} by the device
of sampling the process at the arrival times of a Poisson
process to obtain a discrete-time process. 
The condition $H_X=G$ in Theorem~\ref{thm1} is easily seen to be 
equivalent to the condition that $\int_0^{+\infty} \!\! \mu_t(U) \, {\mathrm d}t > 0$ for all 
non-empty open sets $U \subseteq G$.  If this condition is replaced by the stronger assumption
that $\int_0^{+\infty} \!\! \mu_t(B) \, {\mathrm d}t > 0$ for all $B\in \cB_G$ with $\lambda_G(B) > 0$, 
then it is possible to conclude from the results in 
\cite{AKR_67, Meyn_Tweedie_93} that
$\lim_{T\to +\infty} \frac1{T} \int_0^T \varphi ( X_t) \, {\mathrm d}t =
\int_G \varphi \, {\mathrm d} \lambda_G$ almost surely for any bounded measurable function
$\varphi$.  Note that if $Y= (Y_t)_{t\ge 0}$ is as in Example~\ref{exa5} with $(\beta , \sigma^2 , \nu)
= (0,0, \epsilon_{\vartheta})$ for some irrational $\vartheta \in \R$, then $X = (X_t)_{t \ge 0}$ 
defined by $X_t = \langle Y_t \rangle$ satisfies the condition $H_X=\T$, yet
$\int_0^\infty \bbp\{X_t \in B\} \, {\mathrm d}t = 0$ when $B$ is the complement
of $\{\langle n \vartheta \rangle : n \in \N\} \cup \{0\}$ in $\T$, a set with full 
$\lambda_{\T}$-measure.
\end{remark}

\begin{remark}\label{rem4a}
The ergodicity of the stationary process $(\xi X_0^{-1}X_t)_{t\ge 0}$ appearing in the proof of Theorem \ref{thm1}
and that of its discrete-time analogue $(\xi \xi_1  \cdots \xi_n)_{n\in \N}$ can be established more easily
if one assumes that, respectively, $S_t$ for some $t > 0$ and the support $S$ of the common distribution of 
the $\xi_n$, $n\ge 1$, are not contained in the coset of any proper closed normal subgroup of $G$.
Under this additional assumption, it follows from the It\^{o}--Kawada Theorem \cite[Theorem 2.1.4]{heyer_77}
that, for any $t\ge 0$, the random variables $X_t^{-1}X_{t+T}$ converge in distribution
to $\lambda_G$ as $T\to +\infty$, and analogously, $\xi_n \xi_{n+1} \cdots \xi_{n+N}$
converges in distribution to $\lambda_{G}$ as $N\to \infty$. As a consequence, for any $A,B \in \cB_G$,
$$
\bbp \{\xi X_0^{-1} X_t \in A , \xi X_0^{-1} X_{t+T }\in B \}  \: \stackrel{T\to +\infty}{\longrightarrow} \: 
\bbp \{\xi X_0^{-1} X_t \in A\} \, \bbp \{\xi X_0^{-1} X_t \in B \}
\, ,
$$
showing that the process $(\xi X_0^{-1} X_t)_{t\ge 0}$ is actually {\em mixing\/} in this case, and thus 
{\em a fortiori\/} ergodic, cf.\ \cite{krengel, walters_82}. Mixing properties of the semi-flow 
$(R_t)_{t\ge 0}$ have been studied in \cite{georgii_97}. By contrast, the proof of Theorem \ref{thm1} 
presented above only uses the ergodicity of $(\xi X_0^{-1} X_t)_{t\ge 0}$. Ergodicity is all that
can be hoped for in general: For example, take $G = \T$ and let $(\xi_n)_{n\in \N}$ be an i.i.d.\ 
sequence with $\bbp \bigl\{\xi_1 = \langle \sqrt{2} \rangle\bigr\}=1$. In this case, the
process $(\xi + \xi_1 + \ldots + \xi_n)= \bigl(\langle \xi + n \sqrt{2} \rangle \bigr)$, though
stationary and ergodic, is {\em not\/} mixing, since for $B = \{ \langle t \rangle : 0 \le t \le \frac12 \}\in \cB_{\T}$, 
$$
\limsup \nolimits_{N\to \infty} \bbp \bigl\{  \langle \xi + n \sqrt{2} \rangle  \! \in \! B ,
\langle \xi + (n+N) \sqrt{2} \rangle \! \in \!  B \bigr\} = {\textstyle \frac12 \ne \frac14 } =
\bbp \{\langle \xi + n \sqrt{2} \rangle \! \in \!  B\}^2
$$
holds for every $n\in \N$.
\end{remark}

\begin{remark}
Every Hausdorff compact group $G$ carries a unique normalized Haar measure. Hence the statement of
Theorem \ref{thm1} (resp.\ Corollary \ref{cor4}) makes sense for every jointly measurable stochastic process $X$ (resp.\ 
sequence) in $G$. Even though it makes sense, however, it is not generally true: The asserted equivalence
may break down whenever the increments of $X$ are non-stationary or dependent, or if $G$ fails to be
metrizable. While the former two observations are quite obvious, to see what might
go wrong when $G$ is not metrizable, recall that $G$ is metrizable
if and only if $C(G)$ is separable. The proof of Theorem \ref{thm1} given here uses the metrizability of $G$,
the separability of $C(G)$, and the consequent separability of $G$, and
so there is no hope that this proof will extend.
It is shown in \cite[Corollary 4.5.4]{KN} that if
a Hausdorff compact Abelian group $G$ is not separable, then no sequence is u.d.\ in $G$. Any 
extension of Corollary \ref{cor4}, therefore, must require the separability of $G$
(at least in the Abelian case). 
\end{remark}

\begin{remark}\label{rem8}
As the following short, non-exhaustive compilation illustrates, special cases of Theorem \ref{thm1} 
and Corollary \ref{cor4} as well as related results have repeatedly appeared in the literature. 

The earliest pertinent references the authors have been able to identify are the announcement
of a {\em Random Ergodic Theorem\/} in \cite{uvn} and its subsequent significant generalization 
in \cite{kakutani_50, Ryll-Nardzewski_54}. In a purely probabilistic setting, \cite{robbins_53} focuses on
discrete-time processes taking values in $\R$ but also considers the case $G=\T$. In addition, extensions to compact
groups and general continuous-time processes $X=(X_t)_{t\ge0}$ in $\R$ are discussed briefly. For the
latter, a sufficient condition for the almost sure continuous uniform distribution of paths is
given under the assumption that
\begin{equation}\label{eq10}
\E \bigl[ e^{i\lambda (X_t - X_0)} \bigr] = \cO (t^{-\delta}) \quad \mbox{as } t\to +\infty
\end{equation}
for every real $\lambda \ne 0$ and the appropriate $\delta = \delta (\lambda)>0$. Note that (\ref{eq10}) 
holds for every non-degenerate Brownian motion in $\R$, but it does not hold if $X$ is, for instance, a 
Poisson process, since in this case $\left|\E \left[ e^{i\lambda (X_t - X_0)}\right] \right| = 1$ whenever 
$\lambda  \in 2\pi \Z$. Theorem \ref{thm1} replaces (\ref{eq10}) with a necessary and sufficient condition.

The uniform distribution of Brownian paths on $\R$ has been established in \cite{derman_54} and
subsequently in \cite{hlawka_70}. Building on these, in the case of Brownian motion on 
$\R$, \cite{stack_71} proves a law of the iterated logarithm for the deviations of
$\frac1{T} \int_0^T \varphi (X_t) \, {\mathrm d}t$ from its expected value, and \cite{BDT}
study the same problem on compact connected Riemannian manifolds, while \cite{loynes_73,schatte_85} 
consider more general processes on $\R$. A sufficient condition for sequences of real-valued random 
variables with stationary, but not necessarily independent increments to be u.d.\ in $\T$ is derived 
in \cite{holewijn_69}, and in \cite{stadje_89} sums of i.i.d.\ random variables are considered under 
the perspective of rotation invariance.

It appears that the Benford property for paths of (some) L\'evy processes has been studied
only rather recently. Utilizing large deviation results, \cite{schuerger_08} essentially
establishes the almost sure c.u.d.\ property of the paths of $X$ for $G = \T$ with 
$X = \langle Y \rangle$, where $Y$ is a continuous local martingale plus a deterministic 
drift. The most important example of this type is standard Brownian motion, and the test 
function $\varphi$ in (\ref{eq0}) may be taken to be merely {\em measurable\/} and 
{\em bounded\/} in this case, i.e., $\varphi \in L^{\infty} (G)$ instead of $\varphi \in C(G)$, 
cf.\ Remark \ref{rem:Harris}. (Notice that $L^{\infty}(G)$ is non-separable whenever $G$ is 
infinite.) In \cite{schuerger_11}, a similar approach is extended to general L\'evy processes 
in $\R$. In this more general setting, however, the desired conclusion -- the almost sure 
Benford property of paths -- is obtained only under an additional regularity condition on the
characteristic function of $Y_1$, referred to as ``standard condition''. Many L\'evy
processes, most importantly perhaps any Poisson process, do not satisfy this condition
and hence are not amenable to the techniques of \cite{schuerger_11}. Although this may look
like a minor technicality, it is not: As shown in Remark~\ref{rem:Harris}, there are
L\'evy processes $X$ on $\T$ for which $\lim_{T\to +\infty} \frac1{T}\int_0^T \varphi (X_t) \, {\mathrm d}t=
\int_{\T} \varphi \, {\mathrm d}\lambda_{\T}$ does not almost surely hold for some $\varphi \in L^{\infty}(\T)$.
\end{remark}

\subsection*{Acknowledgements}

The first author is much indebted to 
T.\ Hill, V.\ Losert, V.\ Runde and B.\ Schmuland for
many stimulating conversations and helpful suggestions. He also wishes
to acknowledge that, prior to this note and independently of it, the
Benford property for L\'evy processes has been studied in the
as yet unpublished treatise \cite{schuerger_11}.

\vspace*{-0.5mm}

\end{document}